\numberwithin{equation}{section}
\providecommand{\norm}[1]{\lVert#1\rVert}
\newcommand{\R}{\mathbb{R}}
\newcommand{\Z}{\mathbb{Z}}
\newcommand{\N}{\mathbb{N}}
\newcommand{\Lt}[1][d]{L^2(\R^{#1})}
\newcommand{\G}{\mathcal{G}}
\newcommand{\A}{\mathcal{A}}
\renewcommand{\l}{\lambda}
\renewcommand{\L}{\Lambda}
\theoremstyle{plain}
\newtheorem{theorem}{Theorem}[section]
\theoremstyle{plain}
\theoremstyle{plain}
\newtheorem{lemma}[theorem]{Lemma}
\theoremstyle{plain}
\newtheorem{proposition}[theorem]{Proposition}
\theoremstyle{definition}
\newtheorem{definition}[theorem]{Definition}
\theoremstyle{remark}
\theoremstyle{remark}
\theoremstyle{definition}
\title[Minimal Frame Operator Norms]{Minimal Frame Operator Norms via Minimal Theta Functions}
\author{Markus Faulhuber}
\address{NuHAG, Faculty of Mathematics, University of Vienna, Oskar-Morgenstern-Platz 1, 1090 Vienna, Austria}
\email{markus.faulhuber@univie.ac.at}
\thanks{The author wishes to thank Karlheinz Gröchenig for many fruitful discussions on the topic and Thomas Strohmer for pointing out reference \cite{HirGarBor93}. The author gratefully acknowledges the feedback and comments of the referees which helped to improve this work significantly. The author was supported by the Austrian Science Fund (FWF): [P26273-N25].}
\begin{document}

\begin{abstract}
	We investigate sharp frame bounds of Gabor frames with chirped Gaussians and rectangular lattices or, equivalently, the case of the standard Gaussian and general lattices. We prove that for even redundancy and standard Gaussian window the hexagonal lattice minimizes the upper frame bound using a result by Montgomery on minimal theta functions.
\end{abstract}

\subjclass[2010]{42C15, 33E05}
\keywords{Frame Bounds, Gabor Frames, Theta Functions.}

\maketitle

\section{Introduction and Main Result}
The moving spirit of this work originates in a conjecture formulated by Strohmer \& Beaver in 2003 \cite{StrBea03}. They claim that the condition number of the Gabor frame operator for a Gaussian window and a hexagonal lattice of fixed density $\delta > 1$ is minimal among all lattice of same fixed density $\delta$. In their work Strohmer \& Beaver show that the hexagonal lattice is preferable over the quadratic lattice, which, until then, was a candidate for the optimal condition number due to a conjecture by le Floch, Alard \& Berrou in 1995 \cite{FloAlaBer95}. Just recently, Faulhuber \& Steinerberger proved that in the case of integer redundancy the quadratic lattice is indeed optimal, if only rectangular lattices are considered \cite{FauSte16}. In fact, it was shown that the lower and upper frame bound are optimized independently from each other by the square lattice and hence, their ratio, which gives the condition number, is minimized in this case. When it comes to analytic 
investigations of Gabor frame bounds, the work by Janssen starting from 1995 \cite{Jan95, Jan96} is a cornerstone. For the Gaussian window, it turns out that optimizing the frame bounds in the case of even redundancy is equivalent to finding the maximum and the minimum of the heat kernel of the flat Laplacian on the torus $\R^2/\L$ and then optimizing among all lattices of fixed area. Investigations in this direction have been carried out by Montgomery in 1988 \cite{Mon88} and by Baernstein in 1997 \cite{Bae97}. Using Montgomery's theorem about minimal theta functions \cite[Theorem 1]{Mon88}, we prove our main result as given in the abstract. Besides Montgomery's theorem, the main tools for this work are the Fourier transform and the Poisson summation formula. Since, we will apply both only on Gaussians, we do not have to care about conditions for the formulas to hold and omit the technical details. The Fourier transform of a function $f$ is given by
\begin{equation}
	\mathcal{F} f (\omega) = \int_{\R^d} f(t) e^{-2\pi i \omega \cdot t} \, dt.
\end{equation}
The Poisson summation formula is then given by
\begin{equation}
	\sum_{n \in \Z^d} f(n+x) = \sum_{k \in \Z^d} \mathcal{F}f(k)e^{2 \pi i k \cdot x}.
\end{equation}
We will use both formulas for dimensions $d = 1,2$. Equipped with the mentioned tools we will prove the following theorem.

\begin{theorem}[Main Result]\label{theorem_main}
	Let $S$ be the generating matrix for the lattice $\L = S \Z^2$ of fixed, even density $2n$, $n \in \N$ and let $g_0(t) = 2^{1/4} e^{-\pi t^2}$ be the standard Gaussian. Let
	\begin{equation}
		\L_h = S_h \Z^2 = \frac{1}{\sqrt{2n}} \, \left(
	\begin{array}{c c}
		\frac{\sqrt[4]{3}}{\sqrt{2}} & 0\\
		\frac{\sqrt{2}}{2 \sqrt[4]{3}} & \frac{\sqrt{2}}{\sqrt[4]{3}}
	\end{array}
	\right) \Z^2
	\end{equation}
	be the hexagonal lattice. We denote the upper frame bound of $\G(g_0,\L)$ by
	\begin{equation}
		B = B_{g_0,2n}(\L).
	\end{equation}
	Then
	\begin{equation}
		B_{g_0,2n}(\L_h) \leq B_{g_0,2n}(\L)
	\end{equation}
	with equality only for $\L = \widetilde{S}_h \Z^2$ with
	\begin{equation}
		\widetilde{S}_h = Q S_h \mathcal{B}.
	\end{equation}
	Here, $Q$ is an orthogonal matrix and $\mathcal{B} \in SL(2,\Z)$.
% 	
% 	Let $S$ be the generating matrix for the lattice $\L = S \Z^2$ of density $2n$, $n \in \N$ and let $g_0(t) = 2^{1/4} e^{-\pi t^2}$ be the standard Gaussian. The smallest possible upper frame bound for the Gabor frame $\G(g_0,\L)$ is achieved only for lattices generated by matrices of the form
% 	\begin{equation}
% 		S_h = \frac{1}{\sqrt{2n}} \, Q \left(
% 	\begin{array}{c c}
% 		\frac{\sqrt[4]{3}}{\sqrt{2}} & 0\\
% 		\frac{\sqrt{2}}{2 \sqrt[4]{3}} & \frac{\sqrt{2}}{\sqrt[4]{3}}
% 	\end{array}
% 	\right) \mathcal{B},
% 	\end{equation}
% 	where $Q$ is an orthogonal matrix and $\mathcal{B} \in SL(2,\Z)$.
\end{theorem}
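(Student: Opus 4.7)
The plan is to reduce $B_{g_0,2n}(\L)$ to the value $\theta_\L(1) = \sum_{\lambda \in \L} e^{-\pi|\lambda|^2}$ (up to a universal constant depending only on $n$) and then invoke Montgomery's theorem. The reduction rests on three ingredients: the Janssen representation of the Gabor frame operator, the explicit form of the Gaussian matrix coefficients $\langle g_0, \pi(\lambda)g_0\rangle$, and the Poisson summation formula.

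First, I would write down Janssen's representation
\begin{equation}
    S_{g_0,\L} = \frac{1}{|\L|} \sum_{\lambda^\circ \in \L^\circ} \langle g_0,\pi(\lambda^\circ)g_0\rangle\, \pi(\lambda^\circ),
\end{equation}
where $\L^\circ$ is the adjoint lattice, of covolume $2n$. For the standard Gaussian the matrix coefficients factor as a Gaussian in $|\lambda^\circ|^2$ times a symplectic phase, as one sees directly by computing a one-dimensional Fourier integral. Since $S_{g_0,\L}$ commutes with every $\pi(\lambda^\circ)$, a Zak-type transform adapted to $\L^\circ$ simultaneously diagonalises it and reduces it to multiplication by a scalar symbol $\Phi\colon \R^2/\L^\circ \to \R_{\geq 0}$; in particular $B_{g_0,2n}(\L) = \max \Phi$.

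Second, I would convert $\Phi$ from a series over $\L^\circ$ into a series over $\L$ via Poisson summation. Here the hypothesis that the density is the \emph{even} integer $2n$ enters decisively: with $|\L| = 1/(2n)$ the symplectic phases in the Janssen coefficients combine with the character produced by the Zak transform in such a way that the resulting sum is real and its maximum is located at a point that does not depend on $\L$. The outcome is an identity of the form
\begin{equation}
    \Phi(z) = c_n \sum_{\lambda \in \L} e^{-\pi|\lambda + \varphi(z)|^2},
\end{equation}
for an affine identification $\varphi$ of tori and a constant $c_n$ depending only on $n$. This is, up to normalisation, the heat kernel on $\R^2/\L$ at a fixed time. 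A positive sum of Gaussians centred at lattice points is maximised on the torus at the origin, so $B_{g_0,2n}(\L) = c_n\, \theta_\L(1)$.

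Finally, Montgomery's theorem \cite[Theorem 1]{Mon88} asserts that, among all lattices $\L \subset \R^2$ of fixed covolume, $\theta_\L(\alpha)$ (for every $\alpha > 0$) is minimised precisely by the hexagonal lattice, with equality only for lattices obtained from $\L_h$ by an orthogonal transformation followed by an integer unimodular change of basis. Combined with the identification $B = c_n\,\theta_\L(1)$, this immediately yields both the inequality $B_{g_0,2n}(\L_h)\le B_{g_0,2n}(\L)$ and the stated equality case $\L = Q S_h \B\,\Z^2$. The main obstacle is the second step: carefully tracking the symplectic phases through the Janssen series and the Zak diagonalisation, and then applying Poisson summation to rewrite $\Phi$ as a theta series on $\L$ whose maximum sits at the origin of the torus. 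Even redundancy is essential --- if $2n$ were replaced by an odd integer, a non-trivial character would survive, the maximum of $\Phi$ would shift to an $\L$-dependent point, and the clean identification $B \propto \theta_\L(1)$ would break down, so that Montgomery's theorem could not be applied directly.
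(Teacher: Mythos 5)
Your proposal follows essentially the same route as the paper: reduce the upper frame bound via Janssen's representation to the maximum of the associated Fourier series (the heat-kernel/Zak symbol), use even redundancy to kill the symplectic phases so that all coefficients are positive and the maximum sits at the origin, apply Poisson summation to identify the result with a theta function of a quadratic form of fixed discriminant attached to $\L$, and conclude with Montgomery's Theorem 1 including its equality case. The paper carries out the phase-tracking you flag as the main obstacle by an explicit computation of $\langle g_{-\gamma}, M_{l/\alpha}T_{k/\beta}g_{-\gamma}\rangle$ after a shear/chirp reduction, but the structure and the key ideas coincide with yours.
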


The theorem tells us that the upper frame bound is minimized by a hexagonal lattice. The matrix $\mathcal{B}$ is an element of the modular group $SL(2,\Z)$ which consists of matrices with integer entries and determinant 1. It is well-known that $\Z^2$ is invariant under the action of this group. In fact, it is just another choice for a basis of our lattice. Furthermore, the action of the matrix $Q$ does not change the geometry of the lattice. Therefore, those matrices will be ignored in our proofs and we will focus on lattices generated by lower triangular matrices. For more details on lattices and their generating matrices we refer to the textbook by Conway \& Sloane \cite{ConSlo99}.

This work is structured as follows. In Section \ref{sec_frame} we briefly introduce the concept of Gabor systems and Gabor frames. We state the frame inequality and describe the constants appearing, which are the objects of interest in this work. In Section \ref{sec_Gaussians} we compute sharp frame bounds for Gabor frames of even redundancy with Gaussian window using Janssen's methods in \cite{Jan96}. We show that, starting from a rectangular lattice, the upper frame bound is always lowered by shearing the lattice, or, equivalently, by chirping the window. We will also see, that the upper frame bound is periodic in the shearing (chirping) parameter. In Section \ref{sec_minimal} we show our main result, Theorem \ref{theorem_main}. Finally, in Section \ref{sec_red2} we prove that for redundancy 2 the value of the condition number in the case of the hexagonal lattice conjectured by Strohmer \& Beaver \cite{StrBea03} is correct. This gives an analytic proof that, concerning the frame condition, the hexagonal lattice is preferable over the square lattice.

\section{Gabor Frames}\label{sec_frame}

A Gabor system for the Hilbert space $\Lt[]$ is the set of time-frequency shifted versions of a window function $g \in \Lt[]$ with respect to some index set $\L \subset \R^2$. We denote the Gabor system by
\begin{equation}
	\G(g,\L) = \lbrace \pi(\l) g \, | \, \l \in \L \rbrace.
\end{equation}
Here, $\pi(\l)$ denotes the time-frequency shift operator
\begin{equation}
	\pi(\l)g(t) = M_\omega T_x g(t) = e^{2\pi i \omega \cdot t} g(t-x), \qquad \lambda = (x,\omega) \in \R^{2}.
\end{equation}
The elements of $\G(g,\L)$ are called atoms. Throughout this work, the index set $\L$ will be a lattice in $\R^2$. We can generate any lattice by some (non-unique) invertible $2 \times 2$ matrix $S$ in the following way
\begin{equation}
	\L = S \Z^{2}.
\end{equation}
The volume of the lattice is given by the absolute value of the determinant of the generating matrix (which is unique) and the density or redundancy is given by the reciprocal of the volume. That is,
\begin{equation}
	\text{vol}(\L) = |(\det(S))| \qquad \text{ and } \qquad \delta(\L) = \frac{1}{\text{vol}(\L)}.
\end{equation}
It is of particular interest to know when a Gabor system is a Gabor frame because in this case there are stable ways to reconstruct a function $f \in \Lt[]$ from the inner products of the function and the atoms of the Gabor system. In order to be a Gabor frame a Gabor system has to fulfil the frame inequality
\begin{equation}\label{eq_frame_inequality}
	A \norm{f}^2 \leq \sum_{\l \in \L} | \langle f, \, \pi(\l) g \rangle |^2 \leq B \norm{f}^2, \qquad \forall f \in \Lt[],
\end{equation}
for some positive constants $0 < A \leq B < \infty$ called frame bounds. Throughout this work, whenever we speak of frame bounds we mean the tightest possible, hence, optimal frame bounds. In the case of an orthonormal basis we have $A=B=1$. To the Gabor system $\G(g,\l)$ we associate the Gabor frame operator
\begin{equation}
	S_g^\L f = \sum_{\l \in \L} \langle f, \, \pi(\l)g \rangle \pi(\l) g, \qquad \forall f \in \Lt[].
\end{equation}
The existence of the upper frame bound guarantees that the operator is bounded and the sharp upper frame bound equals the operator norm of the Gabor frame operator
\begin{equation}
	B = \norm{S_g^\L}_{Op}.
\end{equation}
The existence of the lower frame bound guarantees that the operator is invertible, hence, we can reconstruct $f$ in the following way
\begin{equation}
	f = \sum_{\l \in \L} \langle f, \, \pi(\l)g \rangle \pi(\l) \left(S_g^\L\right)^{-1} g.
\end{equation}

Due to the work of Lyubarskii \cite{Lyu92} and Seip \cite{Sei92} we know that for a Gaussian window any lattice of density $\delta > 1$ generates a Gabor frame for $\Lt[]$. Also, in this case we know that we cannot have a frame for $\delta = 1$ because of the Balian-Low theorem \cite{Bal81, Low85}. This implies that we cannot obtain an orthonormal basis with time-frequency shifted Gaussian windows which makes it interesting to study sharp frame bounds for the Gaussian. For more information about Gabor frames we refer to the classical literature on these topics e.g.\ \cite{Chr03, Fei81, FeiGro97, FeiLue15, Gro01, Hei06} as well as to the survey by Gröchenig \cite{Gro14}.

In what follows we will focus on Gabor frames generated by lattices of fixed density $2n$, $n \in \N$ and the standard Gaussian window
\begin{equation}
	g_0(t) = 2^{1/4} e^{-\pi t^2}
\end{equation}
of $\Lt[]$ unit norm. Within this setting we are interested in the tightest possible bounds and will show that the hexagonal lattice minimizes the upper frame bound.

\section{Chirped Gaussians and Sheared Lattices}\label{sec_Gaussians}
After some preliminaries, we will start to compute frame bounds of Gaussian Gabor frames of redundancy $2n$, $n \in \N$. First of all, we only consider lattices where the generating matrix takes the form
\begin{equation}
	S_\gamma = \left(
	\begin{array}{c c}
		\alpha & 0\\
		\alpha \gamma & \beta
	\end{array}
	\right)
	= \left(
	\begin{array}{c c}
		1 & 0\\
		\gamma & 1
	\end{array}
	\right)
	\left(
	\begin{array}{c c}
		\alpha & 0\\
		0 & \beta
	\end{array}
	\right).
\end{equation}
with $\alpha,\beta > 0$ and $\alpha \beta = \frac{1}{2n}$. Instead of looking at lattices of this type and the standard Gaussian, we can also look at rectangular lattices, i.e., $\gamma = 0$, with chirped Gaussians. A chirped (standard) Gaussian is of the form
\begin{equation}
	g_\gamma(t) = 2^{1/4} e^{\pi i \gamma t^2} e^{-\pi t^2}.
\end{equation}
It is well-known from the general theory on the interplay between the symplectic and the metaplectic group, that the two systems
\begin{equation}\label{eq_equivalent_frames}
	\G\left(g_0,S_\gamma \Z^2\right) \qquad \text{ and } \qquad \G\left(g_{-\gamma},S_0 \Z^2\right)
\end{equation}
possess the same sharp frame bounds \cite{Gos11, Gos15, Fau16, Fol89, Gro01}. Second of all, it is enough to look at lattices of the type $\L = S_\gamma \Z^2$ because any lattice $\L \subset \R^2$ can be represented by $\L = Q S_\gamma \Z^2$, where $Q$ is an orthogonal matrix (QR-decomposition). Again, the rotation imposed by $Q$ does not affect the frame bounds as the Gaussian is an eigenfunction with eigenvalue 1 of the corresponding metaplectic operator, which is the fractional Fourier transform described already in \cite{Alm94} or \cite{Dau88}.

We choose the Gabor system $\G\left(g_{-\gamma}, S_0 \Z^2 \right)$ as object of investigation. Due to Janssen \cite{Jan96}, it is known that the lower and upper frame bound are given by the essential infimum and supremum, respectively, of the Fourier series
\begin{equation}\label{eq_periodization_adjoint}
	F_{g_{-\gamma}}(x,\omega) = \frac{1}{\alpha \beta} \sum_{k,l \in \Z} \left\langle g_{-\gamma} , \, M_{\frac{l}{\alpha}} T_{\frac{k}{\beta}} g_{-\gamma} \right\rangle e^{2 \pi i k x} e^{2 \pi i l \omega}.
\end{equation}
From its definition it is clear that we only need to know $F_{g_{-\gamma}}$ on the unit square, i.e., $(x,\omega) \in [0,1] \times [0,1]$. We compute the inner product
\begin{align}
	\left\langle g_{-\gamma} , \, M_{\frac{l}{\alpha}} T_{\frac{k}{\beta}} g_{-\gamma} \right\rangle & = 2^{1/2} \int_{\R} e^{-\pi i \gamma t^2} e^{-\pi t^2} e^{\pi i \gamma (t-k/\beta)^2} e^{-\pi (t-k/\beta)^2} e^{-2 \pi i l t/\alpha} \, dt\\
	& = 2^{1/2} \int_{\R} e^{-\pi i \gamma \left(t^2 - (t-k/\beta)^2\right)} e^{-\pi \left(t^2 + (t-k/\beta)^2\right)} e^{-2 \pi i l t/\alpha} \, dt\\
	& = 2^{1/2} e^{-\pi \frac{k^2}{\beta^2} (1+ i \gamma)} \frac{1}{\sqrt{2}} \int_\R e^{-\pi t^2} e^{- 2 \pi i \left(\frac{l}{\alpha} - \frac{k}{\beta} (\gamma-i)\right) t/\sqrt{2}} \, dt\\
	& = e^{-\pi \frac{k^2}{\beta^2} (1+ i \gamma)} e^{-\frac{\pi}{2} \left(\frac{l}{\alpha} - \frac{k}{\beta} (\gamma-i)\right)^2}\\
	& = e^{-\pi i \frac{k l}{\alpha \beta}} e^{-\frac{\pi}{2} \left(\frac{k^2}{\beta^2} + \left(\frac{l}{\alpha} + \frac{k \gamma}{\beta}\right)^2\right)}.
\end{align}
All we needed in the computations above were a change of variables and the invariance of the standard Gaussian under the Fourier transform (see e.g.\ \cite{Fol89}). Therefore, we have that
\begin{equation}\label{eq_Fourier_series}
	F_{g_{-\gamma}}(x, \omega) = \frac{1}{\alpha \beta} \sum_{k,l \in \Z} e^{-\pi i \frac{k l}{\alpha \beta}} e^{-\frac{\pi}{2} \left(\frac{k^2}{\beta^2} + \left(\frac{l}{\alpha} + \frac{k \gamma}{\beta}\right)^2\right)} e^{2 \pi i k x} e^{2 \pi i l \omega}.
\end{equation}
which can also be identified as the heat kernel of the flat Laplacian on the torus $\R^2/\L$ where $\L = S_\gamma \Z^2$. We note that
\begin{equation}
	F_{g_{-\gamma}}(x,\omega) = F_{g_\gamma}(-x,\omega) = F_{g_{-\gamma}}(-x,-\omega) = F_{g_\gamma}(x,-\omega).
\end{equation}
Changing the sign of $\gamma$ corresponds to a reflection of the lattice with respect to one of the coordinate axes (it does not matter which one). But, since a reflection matrix has determinant -1 it is not an element of the symplectic group and, therefore, we do not have a corresponding metaplectic operator. For this reason we had to start with the system $\G \left(g_{-\gamma}, S_0 \Z^2 \right)$.

We see that for $(\alpha \beta)^{-1} = 2n$ with $n \in \N$ the factor $e^{-\pi i \frac{k l}{\alpha \beta}}$ in equation \eqref{eq_Fourier_series} vanishes, which is why we consider the case of even redundancy. In this case, the function $F_{g_{-\gamma}}$ takes its maximum whenever $(x, \omega) \in \Z \times \Z$. This implies that in the case of even redundancy the optimal upper frame bound for a Gabor frame with standard Gaussian window is given by the formula
\begin{equation}\label{eq_upper_bound_even}
	B = B(\alpha, \beta, \gamma) = 2n \sum_{k,l \in \Z} e^{-\frac{\pi}{2} \left(\frac{k^2}{\beta^2} + \frac{l^2}{\alpha^2}\right)} e^{-\frac{\pi}{2} \left(\frac{k^2 \gamma^2}{\beta^2} - 2 \frac{k l \gamma}{\alpha \beta}\right)}.
\end{equation}
Our goal is to find the global minimum of this function with respect to the parameters $\alpha$, $\beta$ and $\gamma$. Since the product $\alpha \beta = \frac{1}{2n}$ is fixed, this is a minimization problem in 2 variables. It is similar to the problem recently solved by Faulhuber \& Steinerberger \cite{FauSte16}. Unfortunately, the techniques used in that work do not apply as the double sum does not factor into a product of two sums for $\gamma \neq 0$. Still, the double sum converges very nicely, in particular absolutely. We will now show some properties of $B$.
\begin{proposition}\label{prop_separable_maximal}
	For $\alpha$, $\beta$ fixed with $\alpha \beta = \frac{1}{2n}$, $n \in \N$, $B$ is periodic in $\gamma$ with period $\frac{\beta}{\alpha}$ and symmetric with respect to the points $\frac{\beta}{2 \alpha} \Z$. Furthermore, $B$ takes its global maximum only for $\gamma \in \frac{\beta}{\alpha} \Z$, i.e., for rectangular lattices.
\end{proposition}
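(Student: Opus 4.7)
The plan is to work directly with the explicit formula \eqref{eq_upper_bound_even} and first rewrite its exponent as a completed square, noting that
\begin{equation}
\frac{k^2}{\beta^2} + \frac{l^2}{\alpha^2} + \frac{k^2 \gamma^2}{\beta^2} - 2\frac{k l \gamma}{\alpha \beta} = \frac{k^2}{\beta^2} + \left(\frac{l}{\alpha} - \frac{k \gamma}{\beta}\right)^2,
\end{equation}
so that
\begin{equation}
B(\alpha,\beta,\gamma) = 2n \sum_{k,l \in \Z} \exp\!\left(-\frac{\pi}{2}\left(\frac{k^2}{\beta^2} + \left(\frac{l}{\alpha} - \frac{k\gamma}{\beta}\right)^2\right)\right).
\end{equation}
In this form the dependence on $\gamma$ is transparent: it is a shear of the inner quadratic form in the summation index $l$.

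For periodicity, I would substitute $\gamma \mapsto \gamma + \beta/\alpha$. The shifted inner term becomes $(l-k)/\alpha - k\gamma/\beta$, and since $l \mapsto l+k$ is a bijection of $\Z$ for every fixed $k$, re-indexing the inner sum restores the original series; hence $B$ is periodic in $\gamma$ with period $\beta/\alpha$. For the symmetry, it suffices to show $B(\alpha,\beta,-\gamma)=B(\alpha,\beta,\gamma)$, which follows at once from the substitution $l \mapsto -l$. Combining this even symmetry about $0$ with the periodicity $\beta/\alpha$ yields symmetry about every point of $\frac{\beta}{2\alpha}\Z$.

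The nontrivial part is the global-maximum claim. Here I would apply the one-dimensional Poisson summation formula to the inner sum over $l$: for fixed $k$, with $c_k := k \gamma \alpha/\beta$,
\begin{equation}
\sum_{l \in \Z} e^{-\pi (l - c_k)^2/(2\alpha^2)} = \sqrt{2}\,\alpha \sum_{m \in \Z} e^{-2\pi \alpha^2 m^2}\, e^{2\pi i m c_k}.
\end{equation}
Substituting back and pairing $(k,m)$ with $(-k,-m)$ to collapse the complex exponentials into cosines gives the representation
\begin{equation}
B(\alpha,\beta,\gamma) = 2\sqrt{2}\, n \alpha \sum_{k,m \in \Z} e^{-\pi\left(\frac{k^2}{2\beta^2} + 2\alpha^2 m^2\right)} \cos\!\left(2\pi \frac{m k \gamma \alpha}{\beta}\right).
\end{equation}
Every prefactor is strictly positive and $\cos(\cdot)\leq 1$, so the global maximum is attained precisely when every argument $2\pi m k \gamma \alpha/\beta$ lies in $2\pi\Z$; the term $m=k=1$ already forces $\gamma\alpha/\beta \in \Z$, i.e.\ $\gamma \in \frac{\beta}{\alpha}\Z$, while conversely at such $\gamma$ all cosines equal $1$ simultaneously.

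The main obstacle is that in the original series \eqref{eq_upper_bound_even} the $\gamma$-dependence hides behind a signed quadratic exponent, so that the rectangular choice $\gamma=0$ is not visibly optimal term-by-term. The Poisson transform in the single variable $l$ is what converts this into a cosine series with strictly positive coefficients and thereby reduces the optimization in $\gamma$ to the elementary inequality $\cos \leq 1$; no use of the Montgomery bound is required, as this proposition is purely about the shear direction at fixed $(\alpha,\beta)$.
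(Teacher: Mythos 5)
Your proposal is correct and follows essentially the same route as the paper: completing the square in the exponent, applying one-dimensional Poisson summation to the inner sum over $l$, and reducing the maximization in $\gamma$ to the positivity of the Gaussian coefficients in the resulting cosine series (the paper phrases the last step via $\cos(2x)=1-2\sin(x)^2$ rather than $\cos\leq 1$, which is the same inequality). Your explicit re-indexing arguments for the periodicity and the symmetry are a welcome elaboration of what the paper only asserts.
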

\begin{proof}
	The periodicity follows from the according property of the lattice and the symmetry follows from the fact that we can choose the sign of $\gamma$ and the periodicity of the lattice. The property that still needs to be verified is that $B$ assumes its global maximum for $\gamma \in \frac{\beta}{\alpha} \Z$.
	
	We split the double sum in the following way.
	\begin{align}
		B(\gamma) & = 2n \sum_{k \in \Z} \left(e^{-\frac{\pi}{2} \frac{k^2}{\beta^2} \left(1+\gamma^2\right)} \sum_{l \in \Z} e^{-\frac{\pi}{2} \left(\frac{l^2}{\alpha^2} - 2 \frac{k l \gamma}{\alpha \beta}\right)}\right)\\
		& = 2n \sum_{k \in \Z} \left(e^{-\frac{\pi}{2} \frac{k^2}{\beta^2} \left(1+\gamma^2\right)} \sum_{l \in \Z} e^{-\pi n \left(\frac{\beta}{\alpha} l^2 - 2 k l \gamma\right)}\right).
	\end{align}
	We will now use Poisson summation, which in this case involves the Fourier transform of a Gaussian (see e.g.\ \cite{Fol89}), to rewrite the inner sum of the expression. For $k$ fixed we have that
	\begin{equation}
		\sum_{l \in \Z} e^{-\pi n \left(\frac{\beta}{\alpha} l^2 - 2 k l \gamma\right)}
		= e^{\pi n \frac{\alpha}{\beta} \gamma ^2 k^2 } \, \sum_{l \in \Z} e^{-\pi  n \frac{\beta}{\alpha} \left(l - \frac{\alpha}{\beta} \gamma k \right)^2}
		= \sqrt{\frac{\alpha}{n \beta}} \sum_{m \in \Z} e^{-\frac{\pi}{n} \frac{\alpha}{\beta} \left(m^2 - n^2 \gamma ^2 k^2 \right)} e^{-2 \pi i m \frac{\alpha}{\beta} \gamma k}.
	\end{equation}
	Using the fact that $\left( \alpha \beta\right)^{-1} = 2n$ and due to the convergence properties of the double series we can now rewrite it as
	\begin{align}
		B(\gamma) & = 2n \sqrt{\frac{\alpha}{n \beta}} \sum_{k \in \Z} \left(e^{-\frac{\pi}{2} \frac{k^2}{\beta^2} \left(1+\gamma^2\right)} \sum_{l \in \Z} e^{-\frac{\pi}{n} \frac{\alpha}{\beta} \left(l^2 - n^2 \gamma ^2 k^2 \right)} e^{-2 \pi i l \frac{\alpha}{\beta} \gamma k} \right)\\
		& = 2n \sqrt{2} \, \alpha \sum_{k \in \Z}\left( e^{- 2 \pi \alpha^2 n^2 k^2 (1+\gamma^2)} \sum_{l \in \Z} e^{-2 \pi \alpha^2 (l^2 - n^2 k^2 \gamma^2)} e^{-2 \pi i k l \frac{\alpha}{\beta} \gamma} \right)\\
		& = 2n \sqrt{2} \, \alpha \sum_{k,l \in \Z} e^{-2\pi \alpha^2 \left(l^2 + k^2 n^2\right)} e^{-2 \pi i k l \frac{\alpha}{\beta} \gamma}\\
		& = 2n \sqrt{2} \, \alpha \sum_{k,l \in \Z} e^{-2\pi \alpha^2 \left(l^2 + k^2 n^2\right)} \cos\left(2 \pi k l \frac{\alpha}{\beta} \gamma \right).
	\end{align}
	By using the identity $\cos(2x) = \cos(x)^2-\sin(x)^2 = 1 - 2\sin(x)^2$ we get
	\begin{equation}
		B = 2n \sqrt{2} \alpha \sum_{k,l \in \Z} e^{-2\pi \alpha^2 \left(l^2 + k^2 n^2\right)} \left(1 - 2\sin\left(\pi k l \frac{\alpha}{\beta} \gamma\right)^2\right)
	\end{equation}
	and we see that $B(\gamma)$ is maximal for $\gamma \in \frac{\beta}{\alpha} \Z$.
\end{proof}
Proposition \ref{prop_separable_maximal} shows that for even redundancy the upper frame bound of any Gabor frame with rectangular lattice and standard Gaussian window will become smaller by either shearing the lattice or by chirping the window ($\gamma \notin \frac{\beta}{\alpha} \Z$). This gives analytic evidence that the quadratic lattice cannot be optimal for the upper frame bound among general lattices. We will now state a lemma by Montgomery from which we will be able to conclude when $B$ assumes its minimum. The proof of the upcoming result needs a lot of cumbersome computations and estimates. Therefore, we refer to the original work where this lemma has been proved \cite[Lemma 4]{Mon88}.

\begin{lemma}[Montgomery]\label{lemma_Montgomery}
	Let $c > 0$ be fixed, $r \in \left(0, \frac{1}{2} \right)$ and $s \geq \frac{1}{2}$. We define
	\begin{equation}
		\vartheta(r,s;c) = \sum_{k}\left(e^{-c \pi s k^2} \sum_{l \in \Z} e^{-\frac{c \pi}{s} (l + kr)^2} \right).
	\end{equation}
	Then
	\begin{equation}
		\frac{\partial}{\partial r} \vartheta(r,s;c) < 0.
	\end{equation}
\end{lemma}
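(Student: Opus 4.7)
My strategy is to apply Poisson summation to the inner sum in order to convert $\vartheta$ into a fully symmetric Gaussian $\times$ cosine double series, differentiate term-by-term, and use sign symmetries to reduce the claim to a positivity statement for a single sine series. For fixed $k$, one-dimensional Poisson summation (the inner sum is a periodization of a Gaussian) gives
\begin{equation}
\sum_{l \in \Z} e^{-\frac{c \pi}{s}(l + kr)^2} = \sqrt{s/c} \sum_{m \in \Z} e^{-\pi s m^2/c} \, e^{2 \pi i m k r},
\end{equation}
so substituting back and pairing $(k, m)$ with $(-k, -m)$ produces the real expression
\begin{equation}
\vartheta(r,s;c) = \sqrt{s/c} \sum_{k, m \in \Z} e^{-\pi s (c k^2 + m^2/c)} \cos(2 \pi m k r).
\end{equation}

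The doubly-Gaussian decay justifies term-by-term differentiation. Since $m k \sin(2 \pi m k r)$ is invariant under $k \mapsto -k$ (and the $k = 0$ or $m = 0$ terms vanish), the four sign choices on $(k,m)$ collapse to one, giving
\begin{equation}
\partial_r \vartheta(r,s;c) = -8 \pi \sqrt{s/c} \sum_{k, m \geq 1} m k \sin(2 \pi m k r) \, e^{-\pi s(c k^2 + m^2/c)}.
\end{equation}
Regrouping by the product $n = m k$, with positive weights $w_n = \sum_{k m = n,\, k, m \geq 1} e^{-\pi s(c k^2 + m^2/c)}$, the claim $\partial_r \vartheta < 0$ becomes the single-variable positivity statement
\begin{equation}
\sum_{n \geq 1} n \, w_n \sin(2 \pi n r) > 0 \qquad \text{for all } r \in (0, 1/2), \; s \geq 1/2.
\end{equation}

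The hard part is establishing this positivity. The $n = 1$ term is strictly positive since $\sin(2 \pi r) > 0$ on $(0, 1/2)$, but for $n \geq 2$ the sines oscillate in sign and must be controlled. AM-GM gives $c k^2 + m^2/c \geq 2 k m = 2 n$, so $w_n \leq d(n) \, e^{-2 \pi s n}$, which decays geometrically once $s \geq 1/2$. For small $r$ the bound $|\sin(2 \pi n r)| \leq 2 \pi n r$ reduces the tail to $O\!\bigl(r \sum_{n \geq 2} n^2 d(n) e^{-2 \pi s n}\bigr)$, which one compares to the leading $w_1 \sin(2\pi r) \asymp r \, e^{-\pi s(c + 1/c)}$; for $r$ bounded away from $0$ one would split the tail into maximal blocks on which $\sin(2 \pi n r)$ keeps a fixed sign and absorb each negative block by the trivial bound. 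For extreme values of $c$ the naive comparison degrades and one must re-expose additional theta-function structure (e.g.\ by reapplying Poisson in $k$ to recognize $\sum_n n \sin(2 \pi n r) e^{-\pi s n^2/c}$ as the $r$-derivative of a Gaussian periodization, manifestly negative on $(0,1/2)$). Matching these regimes and packaging the estimates uniformly in $(r, s)$ is the bookkeeping that fills Montgomery's argument \cite{Mon88}, and is why the author defers to the original source rather than reproducing it here.
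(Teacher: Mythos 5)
The paper deliberately gives no proof of this lemma: the author states that the argument requires ``a lot of cumbersome computations and estimates'' and refers the reader to \cite[Lemma 4]{Mon88}. Your reduction is correct and, as far as it goes, reproduces the opening of Montgomery's own argument: the one-dimensional Poisson summation in $l$ with the Gaussian Fourier transform factor $\sqrt{s/c}$ is right, the pairing of sign choices to obtain the real cosine double series is right, termwise differentiation is justified by the doubly-Gaussian decay, and the regrouping by $n = km$ into the claim $\sum_{n \geq 1} n\, w_n \sin(2\pi n r) > 0$ with $w_n = \sum_{km = n,\, k,m \geq 1} e^{-\pi s (c k^2 + m^2/c)}$ is a faithful restatement of what must be shown (the constant $-8\pi\sqrt{s/c}$ is also correct).

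However, the proposal stops exactly where the content of the lemma begins, so judged as a proof it has a genuine gap at its central step. The positivity of the sine series on all of $(0,1/2)$, uniformly in $s \geq 1/2$ and $c > 0$, is never established: your small-$r$ comparison of the $n=1$ term against the tail breaks down for extreme $c$, as you concede, because $w_1 = e^{-\pi s (c + 1/c)}$ can be exponentially smaller than the AM--GM tail bound $d(n) e^{-2\pi s n}$; the proposed absorption of negative sign-blocks for $r$ bounded away from $0$ is asserted rather than carried out, and it is not evident that the ``trivial bound'' suffices block by block; and the suggested rescue of ``reapplying Poisson in $k$'' produces a different weighted sine series whose negativity statement does not obviously patch together with the first regime. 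In the end you defer to Montgomery for precisely the estimates that constitute the proof --- which is the same move the paper makes, so nothing you wrote is wrong, but the proposal should be regarded as a correct reduction plus a citation, not as a proof of the lemma.
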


The variable $r$ in Montgomery's lemma in principle represents the shearing of the lattice whereas the variable $s$ is related to the lattice parameter $\alpha$ (or $\beta$ as one prefers). The parameter $c$ describes the density of the lattice. Fixing $\alpha$ and $\beta$, $B$ only depends on the shearing parameter $\gamma$. By rewriting equation \eqref{eq_upper_bound_even} in the following way
\begin{align}
	B(\gamma) & = 2n \sum_{k,l \in \Z} e^{-\frac{\pi}{2}\left(\frac{l}{\alpha} - \frac{k \gamma}{\beta}\right)^2} e^{-\frac{\pi}{2} \frac{k^2}{\beta^2}}\\
	& = 2n \sum_{k \in \Z} \left( e^{-2 \pi \alpha^2 n^2 k^2} \sum_{l \in \Z} e^{-\frac{\pi}{2 \alpha^2}\left(l - \gamma \frac{\alpha}{\beta} k \right)^2}\right)\\
	& = 2n \, \vartheta \left(-\gamma \frac{\alpha}{\beta}, 2 \alpha^2 n; n \right),
\end{align}
Montgomery's lemma implies the following proposition.
\begin{proposition}\label{prop_optimal_shear}
	For $\alpha$, $\beta$ fixed with $\alpha \geq \frac{1}{\sqrt{2}\sqrt{2n}}$ and $\alpha \beta = \frac{1}{2n}$, $n \in \N$, $B$ assumes its global minimum only for $\gamma \in \frac{\beta}{\alpha} \left(\frac{1}{2} + \Z\right)$.
\end{proposition}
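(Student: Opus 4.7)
The plan is to apply Montgomery's lemma directly to the rewriting $B(\gamma) = 2n\,\vartheta(-\gamma\alpha/\beta,\, 2\alpha^2 n;\, n)$ already derived above. First I would match parameters: set $r = -\gamma\alpha/\beta$, $s = 2\alpha^2 n$, $c = n$. The hypothesis $c > 0$ is immediate, and $s \geq 1/2$ is precisely the condition $2\alpha^2 n \geq 1/2$, i.e.\ $\alpha \geq \frac{1}{\sqrt{2}\sqrt{2n}}$, which is exactly the standing assumption of the proposition. So Montgomery's lemma applies in the stated $\alpha$-regime.

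Next I would use Proposition \ref{prop_separable_maximal} to reduce the problem to a single fundamental interval. That proposition states $B$ has period $\beta/\alpha$ in $\gamma$ and is symmetric about the points $\frac{\beta}{2\alpha}\Z$. Translated to the new variable $r = -\gamma\alpha/\beta$, this means $\vartheta(\,\cdot\,,s;c)$ has period $1$ and is symmetric about the points $\tfrac{1}{2}\Z$; in particular $\vartheta$ is even in $r$ and $\vartheta(\tfrac{1}{2}+t) = \vartheta(\tfrac{1}{2}-t)$. A short direct verification of these symmetries from the explicit series (shift $l \mapsto l-k$ for the periodicity, and $l\mapsto -l$ together with $k \mapsto -k$ for the evenness) can be inserted to be self-contained. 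By these symmetries it suffices to minimize $\vartheta(r,s;c)$ for $r \in [0,\tfrac{1}{2}]$.

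On this interval Montgomery's lemma gives $\frac{\partial}{\partial r}\vartheta(r,s;c) < 0$ for $r \in (0,\tfrac{1}{2})$, so $r \mapsto \vartheta(r,s;c)$ is strictly decreasing on $[0,\tfrac{1}{2}]$ and therefore attains its minimum on this interval only at the endpoint $r = \tfrac{1}{2}$. Combined with periodicity and the symmetry about $\tfrac{1}{2}\Z$, the minima on $\R$ occur exactly at $r \in \tfrac{1}{2} + \Z$. Transforming back via $\gamma = -\tfrac{\beta}{\alpha}r$, and using that the set $\tfrac{1}{2}+\Z$ is invariant under negation, this is equivalent to
\begin{equation}
\gamma \in \tfrac{\beta}{\alpha}\bigl(\tfrac{1}{2} + \Z\bigr),
\end{equation}
which is the claim.

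The proof is essentially a bookkeeping reduction to Montgomery's lemma; the only conceptual step is recognising that the parameter constraint $s \geq \tfrac{1}{2}$ in the lemma corresponds precisely to the hypothesis $\alpha \geq \tfrac{1}{\sqrt{2}\sqrt{2n}}$. The main obstacle (already relegated to \cite{Mon88}) is of course Montgomery's monotonicity result itself; without it one would face the sign-indefinite trigonometric remainder $\sum_{k,l} e^{-2\pi\alpha^{2}(l^{2}+k^{2}n^{2})}\sin(\pi kl\,\tfrac{\alpha}{\beta}\gamma)^{2}$ from the last line of the previous proof, which is not manifestly maximised at $\gamma\alpha/\beta \in \tfrac{1}{2}+\Z$.
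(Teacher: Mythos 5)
Your proof is correct and follows exactly the route the paper takes: the identification $B(\gamma) = 2n\,\vartheta(-\gamma\alpha/\beta,\,2\alpha^2 n;\,n)$, the observation that $s\geq\tfrac{1}{2}$ translates to $\alpha\geq\tfrac{1}{\sqrt{2}\sqrt{2n}}$, and the reduction via the periodicity and symmetry of Proposition \ref{prop_separable_maximal} before invoking Montgomery's monotonicity; the paper leaves this bookkeeping implicit, and you have filled it in accurately. (Only a cosmetic quibble: the remainder term with $\sin(\cdot)^2$ you mention at the end is nonnegative rather than sign-indefinite, but that aside plays no role in the argument.)
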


\section{Minimal Theta Functions}\label{sec_minimal}

In this section we will study connections between theta functions, quadratic forms and lattices. For more information we refer to the textbook by Conway \& Sloane \cite{ConSlo99}. We start with some definitions in order to state a theorem formulated by Montgomery \cite{Mon88}. For $\rho > 0$ and a positive definite quadratic form $q(u_1,u_2) = a u_1^2 + b u_1 u_2 + c u_2^2$ with discriminant
\begin{equation}
	D = b^2 - 4 ac = -1
\end{equation}
we define the theta function
\begin{equation}\label{eq_theta_q}
	\theta_q(\rho) = \sum_{k,l \in \Z} e^{-2\pi \rho \, q(k,l)}.
\end{equation}
The theta function satisfies the identity
\begin{equation}\label{eq_identity}
	\theta_q(\rho) = \frac{1}{\rho} \, \theta_q\left(\frac{1}{\rho}\right),
\end{equation}
which can be looked up in \cite[equation (1)]{Mon88} or can be verified by using the 2-dimensional Poisson summation formula. Also, we can associate the symmetric matrix
\begin{equation}
	G_q = \left(
	\begin{array}{c c}
		a & b/2\\
		b/2 & c
	\end{array}
	\right)
\end{equation}
to the quadratic form $q(u_1,u_2) = (u_1,u_2) \cdot G_q \cdot (u_1,u_2)^T$. The index $q$ indicates that the matrix is associated to a quadratic form. Later on, we will also use other indices to emphasize the connection to either a quadratic form or a lattice. We observe that $4 \det(S_q) = -D$.
\begin{theorem}[Montgomery]\label{theorem_Montgomery}
	Let $h(u_1,u_2) = \frac{1}{\sqrt{3}} \left(u_1^2 +u_1 u_2 +u_2^2 \right)$. For any $\rho > 0$ and any positive definite quadratic form $q(u_1,u_2)$ with discriminant $D = -1$ we have
	\begin{equation}
		\theta_q(\rho) \geq \theta_h(\rho).
	\end{equation}
	If we have equality for some $\rho > 0$, then $q$ and $h$ are equivalent forms and $\theta_q \equiv \theta_h$.
\end{theorem}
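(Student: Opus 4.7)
My plan is to parametrize positive definite binary quadratic forms of discriminant $-1$ by the upper half plane $\mathbb{H}$ and reduce the minimisation to the fundamental domain of the modular group $SL(2,\Z)$. To $q(u_1,u_2) = a u_1^2 + b u_1 u_2 + c u_2^2$ with $a > 0$ and $4ac - b^2 = 1$ I associate $z_q = (b+i)/(2a) \in \mathbb{H}$; a short computation gives $q(k,l) = |k + l z_q|^2/(2\,\mathrm{Im}\,z_q)$, so $\theta_q(\rho)$ depends only on $z_q$. The hexagonal form $h$ corresponds to $z_h = e^{i\pi/3} = \tfrac{1}{2} + i\tfrac{\sqrt{3}}{2}$. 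Since unimodular substitutions in $(u_1, u_2)$ realise exactly the Möbius action of $SL(2,\Z)$ on $\mathbb{H}$ and preserve $\theta_q(\rho)$, it is enough to minimise over $z_q$ in the closed standard fundamental domain $\mathcal{F} = \{z = x+iy : |x| \leq \tfrac{1}{2},\ |z| \geq 1\}$, in whose corner $z_h$ lies.

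The proof then hinges on two monotonicity steps. For the first, at $z = x+iy$, completing the square in the inner summation index and relabelling turns $\theta_{q_z}(\rho)$ into precisely $\vartheta(x, y; \rho)$ in the notation of Lemma \ref{lemma_Montgomery}. Since every $z \in \mathcal{F}$ satisfies $y \geq \tfrac{\sqrt{3}}{2} > \tfrac{1}{2}$, Lemma \ref{lemma_Montgomery} applies and yields $\partial_x \theta_{q_z}(\rho) < 0$ for $x \in (0, \tfrac{1}{2})$ at fixed $y$. Combined with the symmetry $x \leftrightarrow -x$ (corresponding to $b \mapsto -b$ in $q$), this confines the minimiser to the boundary $\partial\mathcal{F}$, consisting of the vertical edges $x = \pm\tfrac{1}{2}$, $y \geq \tfrac{\sqrt{3}}{2}$, and the unit arc $|z|=1$, $|x| \leq \tfrac{1}{2}$.

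The second step, which I expect to be the main obstacle, must confine the minimiser further to the corner $z_h$. Since the translation $z \mapsto z+1$ identifies the two vertical edges and the inversion $z \mapsto -1/z$ reflects the unit arc about $z = i$, one only has to argue on one vertical edge and one quarter arc. The natural tool is the order-three element $\sigma: z \mapsto 1 - 1/z$ of $SL(2,\Z)$, which fixes $z_h$ and cyclically permutes the three boundary pieces of $\mathcal{F}$ meeting at $z_h$. For any boundary point $z \neq z_h$, a suitable word in $\sigma$ and the generators produces an $SL(2,\Z)$-image $z^\ast$ lying in the open set $\{|x| < \tfrac{1}{2}\} \cap \{y \geq \tfrac{1}{2}\}$, where Lemma \ref{lemma_Montgomery} can be reapplied on the horizontal slice through $z^\ast$ to give strict inequality against the value at $z_h$. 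When the image would fall outside the range $y \geq \tfrac{1}{2}$ where Lemma \ref{lemma_Montgomery} holds (which can happen for very large $y$ on the vertical edge), the modular identity \eqref{eq_identity} together with further iterations of $\sigma$ brings the problem back to a region where the lemma applies.

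Combining the two steps yields $\theta_q(\rho) > \theta_h(\rho)$ strictly for every $z_q \in \mathcal{F}\setminus\{z_h\}$, and by $SL(2,\Z)$-invariance the weak inequality $\theta_q(\rho) \geq \theta_h(\rho)$ follows for arbitrary $q$. An equality $\theta_q(\rho_0) = \theta_h(\rho_0)$ at even one $\rho_0 > 0$ then forces $z_q$ to be $SL(2,\Z)$-equivalent to $z_h$, whence $q$ is equivalent to $h$ and the two theta functions coincide for all $\rho$.
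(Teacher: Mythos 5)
First, a point of comparison: the paper does not prove Theorem \ref{theorem_Montgomery} at all --- it is quoted verbatim from Montgomery's paper \cite{Mon88} and used as a black box, just as Lemma \ref{lemma_Montgomery} is. So your proposal is not competing with an internal argument but with Montgomery's original deduction of his Theorem 1 from his Lemma 4. Your setup is correct and is indeed that deduction's framework: the identification $q(k,l)=|k+lz_q|^2/(2\,\mathrm{Im}\,z_q)$ checks out, $\theta_q(\rho)$ becomes exactly $\vartheta(x,y;\rho)$ in the notation of Lemma \ref{lemma_Montgomery}, $h$ corresponds to $e^{i\pi/3}$, and modular invariance reduces everything to the fundamental domain. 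Your first step is fine, except that it is stated more weakly than what Lemma \ref{lemma_Montgomery} gives: since $\tfrac12+iy$ lies in the fundamental domain whenever $y\ge\sqrt3/2$, the horizontal monotonicity pushes the minimum all the way to the single edge $x=\tfrac12$, $y\ge\sqrt3/2$; the unit arc need not be retained as a separate candidate set.

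The genuine gap is your second step, and you have in effect flagged it yourself. What remains to be shown is that $\vartheta(\tfrac12,y;\rho)>\vartheta(\tfrac12,\tfrac{\sqrt3}{2};\rho)$ for all $y>\sqrt3/2$, and your mechanism does not deliver this. Mapping an edge point to an $SL(2,\Z)$-image $z^\ast$ with $0<\mathrm{Re}\,z^\ast<\tfrac12$ and reapplying Lemma \ref{lemma_Montgomery} on the horizontal slice through $z^\ast$ only yields $\vartheta(\mathrm{Re}\,z^\ast,\mathrm{Im}\,z^\ast;\rho)>\vartheta(\tfrac12,\mathrm{Im}\,z^\ast;\rho)$, i.e.\ a comparison with \emph{another} edge point at a different height (and, as one computes for the natural choice $z\mapsto -1/(z-1)$, at a height \emph{below} $\sqrt3/2$, hence outside the fundamental domain) --- not a comparison with the corner. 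To close the argument you would need to show that the resulting iteration produces a telescoping chain of strict inequalities terminating at, or converging to, $e^{i\pi/3}$, and you give no such analysis. Worse, the fallback you propose for the cases where the image leaves the region of validity of Lemma \ref{lemma_Montgomery} cannot work: identity \eqref{eq_identity} rescales the temperature parameter $\rho\mapsto 1/\rho$ and leaves the point in the upper half-plane (equivalently, the shape of the lattice) completely unchanged, so it can never move $z^\ast$ back into the strip $\{\mathrm{Im}\,z\ge\tfrac12\}$. Montgomery's treatment of this edge case is a concrete, separate argument; as written, your proof of the key monotonicity along the boundary --- and hence of the theorem --- is incomplete.
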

The quadratic form $h(u_1,u_2)$ in Montgomery's theorem is associated to the hexagonal lattice. This shows, that among all theta functions associated to a quadratic form with fixed discriminant, the form associated to a hexagonal lattice minimizes the theta function.

As a next step, we will show that the quadratic form associated to the chosen lattice appears directly in the exponent of the theta function describing the upper frame bound. In fact, we will see that for the standard Gaussian window and even redundancy, the upper frame bound is given by sampling and adding the values of the ambiguity function with respect to the lattice. Before doing so, we formulate the definition of the adjoint lattice, a proposition about the frame operator and introduce the ambiguity function.
\begin{definition}[Adjoint lattice]
	For a lattice $\Lambda \subset \R^2$ the adjoint lattice is given by
	\begin{equation}
		\Lambda^\circ = \text{vol}(\Lambda)^{-1} \Lambda.
	\end{equation}
\end{definition}
With this definition we can formulate the following proposition which holds for any $g \in \Lt[]$ and is also known as Janssen's representation.
\begin{proposition}[Janssen's representation]\label{proposition_Janssen_representation}
	Let $g \in \Lt[]$ with the property that
	\begin{equation}\label{eq_Janssen_coefficients}
		\sum_{\lambda^\circ \in \Lambda^\circ} |\langle g, \pi(\lambda^\circ) g \rangle | < \infty.
	\end{equation}
	Then, the frame operator can be written in the form
	\begin{equation}\label{eq_Janssen_operator}
		S^\Lambda_{g} = \emph{vol}(\Lambda)^{-1} \sum_{\lambda^\circ \in \Lambda^\circ} \langle g, \pi(\lambda^\circ) g \rangle \pi(\lambda^\circ).
	\end{equation}
\end{proposition}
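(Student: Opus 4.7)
My plan is to verify \eqref{eq_Janssen_operator} first as an identity of sesquilinear forms on a dense subspace of $\Lt[]$, and then to extend it to all of $\Lt[]$ by means of the absolute-summability hypothesis \eqref{eq_Janssen_coefficients}. The natural dense class here is the Feichtinger algebra $\So$, on which all the time--frequency sums below converge absolutely and the short-time Fourier transform has rapid decay. Taking inner products of both sides of \eqref{eq_Janssen_operator} with $h \in \So$ reduces the proposition to the so-called fundamental identity of Gabor analysis,
\begin{equation}
\sum_{\l \in \L} \langle f, \pi(\l) g\rangle\, \langle \pi(\l) g, h\rangle
= \text{vol}(\L)^{-1} \sum_{\l^\circ \in \L^\circ} \langle g, \pi(\l^\circ) g\rangle\, \langle \pi(\l^\circ) f, h\rangle ,
\end{equation}
for $f,h,g \in \So$.

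To establish this identity I would interpret the left-hand side as a sampling on $\L$ of the function $\Phi(\mu) := V_g f(\mu)\,\overline{V_g h(\mu)}$ on the time--frequency plane, where $V_g f(\mu)=\langle f,\pi(\mu)g\rangle$ denotes the short-time Fourier transform. Next I would apply the Poisson summation formula to $\Phi$, but with pairing given by the standard symplectic form $\sigma$ on $\R^2$ rather than the Euclidean inner product; this is the natural pairing because the composition of time--frequency shifts is governed by $\sigma$, and in dimension two the adjoint lattice $\L^\circ$ of the paper's definition coincides with the symplectic dual of $\L$. The heart of the argument is then a direct computation of the symplectic Fourier transform of $\Phi$, of the form
\begin{equation}
\F_\sigma\!\left(V_g f \cdot \overline{V_g h}\right)(\mu) = \langle g,\pi(\mu) g\rangle\, \langle \pi(\mu) f, h\rangle ,
\end{equation}
which is essentially a translated version of Moyal's identity for the STFT and can be verified by a change of variables. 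Combining this with symplectic Poisson summation immediately produces the fundamental identity above.

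The main obstacle I anticipate is not the symbolic calculation but the analytic passage from $\So\times\So$ to $\Lt[]\times\Lt[]$. On $\So$ everything converges absolutely and pointwise, so the identity of sesquilinear forms is rigorous. The hypothesis \eqref{eq_Janssen_coefficients} then guarantees that the operator series $\text{vol}(\L)^{-1}\sum_{\l^\circ \in \L^\circ}\langle g,\pi(\l^\circ) g\rangle\,\pi(\l^\circ)$ converges absolutely in the operator norm (since each $\pi(\l^\circ)$ is unitary) to a bounded operator on $\Lt[]$. Because the frame operator $S_g^\L$ is also bounded on $\Lt[]$---its norm being the upper frame bound---and the two operators agree as sesquilinear forms on the dense subspace $\So\times\So$, they must agree on all of $\Lt[]$, which completes the proof.
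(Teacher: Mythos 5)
The paper offers no proof of this proposition at all: it is quoted as a classical result (Janssen's representation, going back to \cite{Jan95} and \cite{TolOrr95}) and is only invoked later for the Gaussian window $g_0$. Your sketch is the standard modern proof of that result, and its skeleton is sound: reduce \eqref{eq_Janssen_operator} to the fundamental identity of Gabor analysis in weak form, prove that identity by applying Poisson summation with respect to the symplectic pairing to $V_g f\,\overline{V_g h}$ together with the twisted Moyal-type formula $\F_\sigma\bigl(V_g f\,\overline{V_g h}\bigr)(\mu)=\langle g,\pi(\mu)g\rangle\,\langle\pi(\mu)f,h\rangle$, and observe that in $\R^2$ the symplectic dual of $\L=A\Z^2$ is $J(A^T)^{-1}\Z^2=\det(A)^{-1}A\Z^2$, which is exactly the paper's $\L^\circ=\mathrm{vol}(\L)^{-1}\L$. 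Two points need more care if you want the statement in the generality claimed ($g\in\Lt[]$ with only \eqref{eq_Janssen_coefficients}). First, your Poisson-summation step requires $g\in\So$ as well as $f,h\in\So$; for windows merely in $\Lt[]$ the fundamental identity can fail without extra hypotheses (this is precisely the subtlety analyzed by Feichtinger and Luef), so passing from $g\in\So$ to general $g$ satisfying condition \eqref{eq_Janssen_coefficients} needs an additional approximation argument in the window. Second, you should not assume at the outset that $S_g^\L$ is bounded with norm equal to the upper frame bound: a priori such a $g$ need not generate a Bessel sequence, and the correct logic is that the operator-norm convergence of the right-hand side of \eqref{eq_Janssen_operator}, combined with the weak identity on the dense subspace, is what \emph{establishes} the Bessel property and hence the boundedness of $S_g^\L$. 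Neither caveat matters for the paper's application, where $g=g_0\in\So$ and the system is already known to be a frame.
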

For a Gabor frame with a lattice $\Lambda$ of density $2n$, $n \in \N$ and standard Gaussian window $g_0$ we see that the $\ell^1(\L^\circ)$-norm of the coefficients in Janssen's representation of the frame operator \eqref{eq_Janssen_operator}, which coincides (up to the factor vol$(\L)^{-1}$) with Equation \eqref{eq_Janssen_coefficients}, gives us the upper frame bound by setting $(x,\omega) = (0,0)$ in Equation \eqref{eq_periodization_adjoint}.

We will now show that for Gabor frames with standard Gaussian window $g_0$ and general lattices the upper frame bound is given (up to a factor 2) by the $\ell^1(2 \Lambda)$-norm of the samples of the ambiguity function $\A g_0$ of the standard Gaussian.
\begin{definition}[Ambiguity function]\label{AmbiguityFunction}
	The ambiguity function of a function $f \in \Lt[]$ is given by
	\begin{equation}
		\A f(x,\omega) = \int_{\R^d} f\left(t+\frac{x}{2}\right) \overline{f\left(t-\frac{x}{2}\right)} e^{-2 \pi i \omega \cdot t} \, dt.
	\end{equation}
\end{definition}

The ambiguity function measures how concentrated a function is in the time-frequency plane. For the standard Gaussian the ambiguity function is given by
\begin{equation}
	\A g_0(x, \omega) = e^{-\frac{\pi}{2} \left(x^2 + \omega^2\right)}.
\end{equation}

For $\L = S_\gamma \Z^2 = \left(
\begin{array}{c c}
	\alpha & 0\\
	\alpha \gamma & \beta
\end{array}
\right) \Z^2$, the Gram matrix associated to $\L$ is given by
\begin{equation}\label{eq_Gram}
	G_\L = \left(
	\begin{array}{c c}
		\left(1 + \gamma^2 \right)\alpha^2 & \alpha \beta \gamma \\
		\alpha \beta \gamma & \beta^2
	\end{array}
	\right)
\end{equation}
with determinant $\det(G_\L) = \frac{1}{\alpha^2 \beta ^2} = \frac{1}{4n^2} = -\frac{1}{4}D$, where $D$ is the discriminant of the associated quadratic form. The adjoint lattice, its associated matrix and Gram matrix are given by
\begin{equation}
	\L^\circ = S_\gamma^\circ \Z^2 = \left(
	\begin{array}{c c}
		\frac{1}{\beta} & 0\\
		\frac{\gamma}{\beta} & \frac{1}{\alpha}
	\end{array}
	\right)\Z^2 \quad \text{ and } \quad
	G_{\L^\circ} = \left(
	\begin{array}{c c}
		\frac{1 + \gamma^2}{\beta^2} & \frac{\gamma}{\alpha \beta} \\
		\frac{\gamma}{\alpha \beta} & \frac{1}{\alpha^2}
	\end{array}
	\right).
\end{equation}
For $\G(g_0,\L)$, with vol$(\L)^{-1} = 2n$, $n \in \N$, we compute the upper frame bound as
\begin{align}
	B = B_{g_0,2n}(\Lambda) & = 2n \sum_{k,l \in \Z} e^{-\frac{\pi}{2} \left(\frac{1+\gamma^2}{\beta^2}k^2 + 2 \frac{\gamma k l}{\alpha \beta} + \frac{1}{\alpha^2}l^2 \right)}\\
	& = \det (G_{\L^\circ})^{1/2} \sum_{k,l \in \Z} e^{-\frac{\pi}{2}\left\langle S^\circ (k,l),\, S^\circ (k,l) \right\rangle}\\
	& = 2(-D)^{-1/2} \sum_{k,l \in \Z} e^{-2 \pi (-D)^{-1/2} q_\L(k,l)}\\
	& \overset{\eqref{eq_identity}}{=} 2 \sum_{k,l \in \Z} e^{-\frac{\pi}{2} q_\L(2k,2l)}\\
	& = 2 \sum_{\lambda \in \Lambda} Ag_0(2\lambda).
\end{align}
Here, $q_\L(k,l) = (-D)^{-1/2} \left\langle S (k,l),\, S (k,l) \right\rangle$ is a quadratic form with discriminant $-1$. Summing up the results we find out that
\begin{equation}
	B_{g_0,2n}(\Lambda) = 2 \, \theta_{q_\Lambda} \left( \frac{1}{n} \right) = 2n \, \theta_{q_\Lambda} (n).
\end{equation}
Therefore, Montgomery's theorem (Theorem \ref{theorem_Montgomery}) implies that for a Gabor frame with Gaussian window and any lattice of even redundancy, a lattice with associated Gram matrix
\begin{equation}
	G_h = S_h^T S_h = \frac{1}{2n} \frac{1}{\sqrt{3}}\left(
	\begin{array}{c c}
		2 & 1\\
		1 & 2
	\end{array}
	\right)
\end{equation}
yields the minimal Gabor frame operator norm or equivalently the smallest possible upper frame bound. From the entries of the Gram matrix $G_\L$ in equation \eqref{eq_Gram} and the condition $(\alpha \beta)^{-1} = 2n$, $n \in \N$ we determine the lattice parameters $\alpha$, $\beta$ and $\gamma$ (up to a sign which does not affect the geometry of the lattice or the frame bounds). We get
\begin{equation}
	\alpha = \frac{1}{\sqrt{2n}} \frac{\sqrt[4]{3}}{\sqrt{2}}, \quad \beta = \frac{1}{\sqrt{2n}} \frac{\sqrt{2}}{\sqrt[4]{3}}, \quad \gamma = \frac{1}{\sqrt{3}}
\end{equation}
and hence
\begin{equation}
	S_h = Q \frac{1}{\sqrt{2n}}
	\left(
	\begin{array}{c c}
		\frac{\sqrt[4]{3}}{\sqrt{2}} & 0\\
		\frac{\sqrt{2}}{2 \sqrt[4]{3}} & \frac{\sqrt{2}}{\sqrt[4]{3}}
	\end{array}
	\right).
\end{equation}
The matrix $Q$ is an orthogonal matrix which does not affect the Gram matrix $G_h$ since $Q^T Q = I$. Therefore, only rotated versions of the hexagonal lattice give the optimal upper frame bound. Any other quadratic form which also minimizes the theta function $\theta_q(\rho)$ is as well associated to a version of the hexagonal lattice, i.e., a version of the hexagonal lattice with another choice of basis and Gram matrix
\begin{equation}
	\widetilde{G}_h = \mathcal{B}^T G_h \, \mathcal{B}, \quad \mathcal{B} \in SL(2,\Z),
\end{equation}
which proves our main result Theorem \ref{theorem_main}.

\section{A Result for Redundancy 2}\label{sec_red2}

We want to make a final remark on frame bounds of the Gabor frame $\G(g_0,\L)$ for redundancy 2. If we take the standard Gaussian and the square lattice $\L_\square = \frac{1}{\sqrt{2}} \Z \times \frac{1}{\sqrt{2}} \Z$ we find out that the sharp lower and upper frame bound are given by
\begin{equation}\label{eq_lower_bound_2}
	A_{g_0,2}(\L_\square) = 2 \left(\sum_{k \in \Z} (-1)^k e^{-\pi k^2} \right)^2 = 2 \left(\frac{\pi^{1/4}}{2^{1/4} \Gamma\left(\frac{3}{4} \right)}\right)^2
\end{equation}
\begin{equation}\label{eq_upper_bound_2}
	B_{g_0,2}(\L_\square) = 2 \left(\sum_{k \in \Z} e^{-\pi k^2} \right)^2 = 2 \left(\frac{\pi^{1/4}}{\Gamma\left(\frac{3}{4} \right)}\right)^2,
\end{equation}
where $\Gamma$ is the usual {gamma function}\index{gamma function} defined as
\begin{equation}
	\Gamma(t) = \int_{\R_+} x^{t-1} e^{-x} \, dx.
\end{equation}
The two formulas for $A_{g_0,2}(\L_\square)$ and $B_{g_0,2}(\L_\square)$ follow from classical results about Jacobi's theta-3 and theta-4 function. For $s > 0$, Jacobi's theta functions are given by
\begin{align}
	\theta_2(s) & = \sum_{k \in \Z} e^{-\pi \left(k-\frac{1}{2}\right)^2 s},\\
	\theta_3(s) & = \sum_{k \in \Z} e^{-\pi k^2 s},\\
	\theta_4(s) & = \sum_{k \in \Z} (-1)^k e^{-\pi k^2 s}.
\end{align}
A version of equation \eqref{eq_lower_bound_2} is also given in \cite[equation (28)]{Mez13}. By using the identities
\begin{equation}
	\theta_3(s)^4 = \theta_2(s)^4 + \theta_4(s)^4,
\end{equation}
and
\begin{equation}
	\theta_2(s) = \frac{1}{\sqrt{s}} \, \theta_4(s)
\end{equation}
(see e.g.\ \cite[Chap.\ 4.4]{ConSlo99}) we find out that
\begin{equation}
	B_{g_0,2}(\L_\square)^2 = 4 \theta_3(1)^4 = 8 \theta_4(1)^4 = 2 A_{g_0,2}(\L_\square)^2.
\end{equation}
Equation \eqref{eq_upper_bound_2} follows now from equation \eqref{eq_lower_bound_2}. It is easy to see that the condition number is $B_{g_0,2}(\L_\square)/A_{g_0,2}(\L_\square) = \sqrt{2}$ which was also observed by Strohmer \& Beaver in \cite{StrBea03}. Furthermore, they mentioned that in the case of the hexagonal lattice of redundancy 2, the condition number is approximately $B_{g_0,2}(\L_h)/A_{g_0,2}(\L_h) \approx 1.2599$ which, as they say, `is suspiciously close to $\sqrt[3]{2}$'. We will now prove that it is exactly the conjectured value.

Taking the quadratic form $h(u_1,u_2) = \frac{1}{\sqrt{3}} \left( u_1^2 - u_1 u_2 +u_2^2 \right)$, which is equivalent to the quadratic form in Theorem \ref{theorem_Montgomery}, an adaption of a result by Baernstein \cite{Bae97} shows that the series
\begin{equation}
	F_h(x,\omega;\rho) = \sum_{k,l \in \Z} e^{-2 \pi \rho \, h(k,l)} e^{2 \pi i (kx + l \omega)}
\end{equation}
assumes its minimum at $(x,\omega) = \left( \frac{1}{3}, \frac{1}{3} \right)$ and, due to symmetry, also at $(x,\omega) = \left( \frac{2}{3}, \frac{2}{3} \right)$ and of course at all integer shifts of these points. Therefore, the lower and upper frame bound of the Gabor system $\G(g_0, \L_h)$, where $\L_h$ is a hexagonal lattice of redundancy 2, are given by
\begin{equation}
	A_{g_0,2}(\L_h) = 2 \sum_{k,l \in \Z} e^{-2 \pi \frac{1}{\sqrt{3}} \left( k^2 - k l + l^2 \right)} e^{2 \pi i \left(k + l\right)/3} = 2 \sum_{k,l \in \Z} e^{-2 \pi \frac{1}{\sqrt{3}} \left( k^2 + k l + l^2 \right)} e^{2 \pi i \left(k - l\right)/3}
\end{equation}
and
\begin{equation}
	B_{g_0,2}(\L_h) = 2 \sum_{k,l \in \Z} e^{-2 \pi \frac{1}{\sqrt{3}} \left( k^2 + k l + l^2 \right)}.
\end{equation}
\begin{proposition}\label{prop_optimal_condition}
	For redundancy 2, the ratio of the frame bounds for a Gabor frame with standard Gaussian window and a hexagonal lattice is given by
	\begin{equation}
		\frac{B_{g_0,2}(\L_h)}{A_{g_0,2}(\L_h)} = \sqrt[3]{2}.
	\end{equation}
\end{proposition}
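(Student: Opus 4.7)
The plan is to identify $B_{g_0,2}(\L_h)$ and $A_{g_0,2}(\L_h)$ with values at $q_0 = e^{-2\pi/\sqrt{3}}$ of two of the three Borwein cubic theta functions, and then invoke the cubic theta identity together with a Poisson summation argument at a Fricke fixed point.

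With $\omega = e^{2\pi i/3}$ define the Borwein cubic theta functions
\begin{equation}
    a(q) = \sum_{k,l \in \Z} q^{k^2+kl+l^2}, \quad b(q) = \sum_{k,l \in \Z} \omega^{k-l} q^{k^2+kl+l^2}, \quad c(q) = \sum_{k,l \in \Z} q^{(k+1/3)^2+(k+1/3)(l+1/3)+(l+1/3)^2}.
\end{equation}
The formulas immediately preceding the proposition give $B_{g_0,2}(\L_h) = 2a(q_0)$ and $A_{g_0,2}(\L_h) = 2b(q_0)$, so the statement reduces to showing $a(q_0)/b(q_0) = \sqrt[3]{2}$.

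First I would prove the key equality $b(q_0) = c(q_0)$. The value $\tau_0 = i/\sqrt{3}$ (for which $q_0 = e^{2\pi i \tau_0}$) is a fixed point of the Fricke involution $\tau \mapsto -1/(3\tau)$. Applying the two-dimensional Poisson summation formula to the series $b(q_0)$, and absorbing the character $\omega^{k-l} = e^{2\pi i((k/3)+(-l/3))}$ as a shift by $(-1/3, 1/3)$ in the Fourier variable, one obtains a theta sum in the dual quadratic form $Q'(u,v) = u^2 - uv + v^2$ with modular parameter $-1/(3\tau)$ and multiplicative prefactor $(-i\tau\sqrt{3})^{-1}$ coming from the two-variable Gaussian Fourier transform. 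At $\tau = \tau_0$ the prefactor simplifies to $1$ and the modular parameter returns to $\tau_0$; the sign change $k \mapsto -k$ converts $Q'$ back into $Q(k,l) = k^2+kl+l^2$ and moves the shift to $(1/3, 1/3)$, so the Poisson image is precisely $c(q_0)$.

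Having $b(q_0) = c(q_0)$, I would invoke the Borwein brothers' cubic theta identity
\begin{equation}
    a(q)^3 = b(q)^3 + c(q)^3,
\end{equation}
a classical fact from the theory of cubic modular equations. Substituting our equality gives $a(q_0)^3 = 2 b(q_0)^3$, and since both quantities are strictly positive real numbers (the Gabor system is a frame, so $A > 0$), taking cube roots yields
\begin{equation}
    \frac{B_{g_0,2}(\L_h)}{A_{g_0,2}(\L_h)} = \frac{a(q_0)}{b(q_0)} = \sqrt[3]{2}.
\end{equation}
The main technical obstacle will be the careful bookkeeping in the Poisson step: one must verify that the character-induced shift, the passage from the form $Q$ to its dual $Q'$, the explicit prefactor $(-i\tau\sqrt{3})^{-1}$, and the final $k \mapsto -k$ substitution all combine exactly into $c(q_0)$, without any stray phase or multiplicative correction. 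That the arithmetic conspires precisely at $\tau_0 = i/\sqrt{3}$ is ultimately what makes the identity possible, since Fricke-conjugate values of $\tau$ would produce a multiplier other than $1$.
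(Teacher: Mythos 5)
Your proposal is correct and follows essentially the same route as the paper: identify $B_{g_0,2}(\L_h)=2a(q_0)$ and $A_{g_0,2}(\L_h)=2b(q_0)$ with $q_0=e^{-2\pi/\sqrt{3}}$, prove $b(q_0)=c(q_0)$ by two-dimensional Poisson summation (the paper phrases the Fricke fixed-point mechanism as the fact that the Gaussians attached to $k^2+kl+l^2$ and $k^2-kl+l^2$ are Fourier transforms of each other precisely at this value of the parameter), and conclude via the cubic identity $a(q)^3=b(q)^3+c(q)^3$ of Hirschhorn, Garvan \& Borwein. The bookkeeping you flag as the main obstacle is exactly the content of the paper's displayed Poisson-summation identity, and it works out as you describe.
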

\begin{proof}
	We will use a result on cubic theta functions by Hirschhorn, Garvan \& Borwein derived in 1993 \cite{HirGarBor93} to prove the statement. To stick close to their notation we introduce the following functions
	\begin{align}
		a(q) & = \sum_{k,l \in \Z} q^{k^2 + kl + l^2}\\
		b(q) & = \sum_{k,l \in \Z} q^{k^2 + kl + l^2} \zeta_3^{k-l}\\
		c(q) & = \sum_{k,l \in \Z} q^{\left(k+\frac{1}{3}\right)^2 + \left(k+\frac{1}{3}\right)\left(l+\frac{1}{3}\right) + \left(l+\frac{1}{3}\right)^2}
	\end{align}
	where $\zeta_3^3 = 1$ and $\zeta_3 \neq 1$ and $|q| < 1$. These functions fulfil the identity
	\begin{equation}
		a(q)^3 = b(q)^3 + c(q)^3
	\end{equation}
	(see \cite[equation (1.8)]{HirGarBor93}). Setting $q = e^{-2 \pi/\sqrt{3}}$ we will now prove that actually $b\left(e^{-2 \pi/\sqrt{3}}\right) = c\left(e^{-2 \pi/\sqrt{3}}\right)$ by using Poisson summation. We start with the observation that $e^{-\frac{2 \pi}{\sqrt{3}} \left( k^2 + kl +l^2\right)}$ and $e^{-\frac{2 \pi}{\sqrt{3}} \left( k^2 - kl +l^2\right)}$ are the 2-dimensional Fourier transforms of each other which is best confirmed by using Folland's result on the Fourier transform of Gaussians \cite[App.\ A, Theorem 1]{Fol89}. Therefore, by using the 2-dimensional Poisson summation formula we have 
	\begin{equation}
		\sum_{k,l \in \Z} e^{-\frac{2 \pi}{\sqrt{3}} \left( k^2 - kl + l^2\right)} e^{-2 \pi i \frac{(k+l)}{3}}
		= \sum_{k,l \in \Z} e^{-\frac{2 \pi}{\sqrt{3}} \left( \left(k+\frac{1}{3}\right)^2 + \left(k+\frac{1}{3}\right)\left(l+\frac{1}{3}\right) + \left(l+\frac{1}{3}\right)^2 \right)}.
	\end{equation}
	Hence, it follows that
	\begin{align}
		A_{g_0,2}(\L_h) & = 2 \, b\left(e^{-2 \pi/\sqrt{3}}\right) = 2 \, c\left(e^{-2 \pi/\sqrt{3}}\right)\\
		B_{g_0,2}(\L_h) & = 2 \, a\left(e^{-2 \pi/\sqrt{3}}\right)
	\end{align}
	which gives
	\begin{equation}
		B_{g_0,2}(\L_h)^3 = 2 \, A_{g_0,2}(\L_h)^3
	\end{equation}
	and the proof is finished.
\end{proof}
The results in this section give the first analytic proof that, for a Gabor frame with standard Gaussian window and a lattice of redundancy 2, the hexagonal lattice yields a better frame condition number than the square lattice.

\bibliographystyle{plain}

\end{document}